\newtheorem{Main}{Theorem}
 \def\hf{\hat{f}}
 \def\cS{\mathcal S}
 \def\simmple{\cS}
 \def\bbbbb{}
 \def\tvarphi{\widetilde{\varphi}}
\def\epsilon{\eps}
\def \cM1{{\mathcal M_1}}
\def\cM{{\mathcal{M}}}
\def\<{{\left<}}
\def\>{{\right>}}
\def\hf{\hat{f}}
\definecolor{dgreen}{rgb}{0.1,0.6,0.1}
\definecolor{bluegreen}{rgb}{0.1,0.5,0.2}
\definecolor{bpurple}{rgb}{0.74,0.2,0.64}
\def\bp{\bpurple}
\def\black{\color{black}}
\def\bpurple{\color{bpurple}}
\definecolor{orange}{rgb}{0.8, 0.33, 0.0}
\def\cM{\mathcal M}
\definecolor{orange}{rgb}{1,0.5,0}
\definecolor{orange}{rgb}{1,0.5,0}
\definecolor{bluegray}{rgb}{0.6, 0.6, 0.8}
\definecolor{bluered}{rgb}{0.6,0.3,0.4}
\newtheorem{lemma}{Lemma}
\newtheorem{theorem}[lemma]{Theorem}
\newtheorem*{theorem*}{Theorem}
\newtheorem{proposition}[lemma]{Proposition}
\newtheorem*{proposition*}{Proposition}
\newtheorem{definition}{Definition}
\newtheorem*{Atheorem*}{Theorem}
\def\<{{\langle}}
\def\>{{\rangle}}
\def\a{{\alpha}}
\def\eps{{\varepsilon}}
\def\N{{\mathbb{N}}}
\def\R{{\mathbb{R}}}
\def\Z{{\mathbb{Z}}}
\def\T{{\mathbb{T}}}
\def \cM1{{\mathcal M_1}}
\def\cM{{\mathcal{M}}}
\def \tf{{\tilde f}}
\def \tg{{\tilde g}}
\def\<{{\left<}}
\def\>{{\right>}} 
\def\vphi{\varphi}
\begin{document}
\author{Fatna Abdedou, Bassam Fayad, Jean-Paul Thouvenot}
\title{Inducing countable Lebesgue spectrum}
\begin{abstract} We show that every ergodic dynamical system induces a system with pure Lebesgue spectrum of infinite multiplicity.
\end{abstract}

\maketitle
\section{Introduction} 
Given a dynamical system $(T,X,\mu)$, we define  for a measurable set $A\subset X$ with $\mu(A)>0$, the induced dynamical system $(T_A,A,\mu_A)$ with $T_A$ being the first return map to the set $A$, and for any measurable set $B\subset A$, $\mu_A(B)=\mu(B)/\mu(A)$.

Recall that, given a dynamical system $(T,X,\mu)$, the corresponding Koopman operator $U_T$ acts on the space $L^2_0(X,\mu)$  of complex square integrable zero  mean functions as $U_Tf=f\circ T$. As each unitary operator acting on a separable Hilbert space, $U_T$ is determined by its maximal spectral type $\sigma_T$ measure (a spectral measure on the circle $\T$ which dominates all other spectral measures) and by the multiplicity function $M_T:\T\to \{1,2,\ldots\}\cup\{\infty\}$   (defined $\sigma_T$-a.e.). 

We say $(T,X,\mu)$ has pure Lebesgue  spectrum when $\sigma_T$ is equivalent to the Lebesgue measure on the circle. 
We say $(T,X,\mu)$ has pure Lebesgue  spectrum with infinite multiplicity, when in addition $M_T(\cdot)=\infty$ Lebesgue almost surely. 

In \cite{delarue}, De La Rue showed that 
 any ergodic dynamical system $(T,X,\mu)$ 
 induces an ergodic dynamical system $(T_A,A,\mu_A)$ that has pure Lebesgue spectrum. He asked if it is possible to insure that the induced system has infinite Lebesgue spectrum. 
  In this note, we adapt the construction of De La Rue to show that the answer to this question is positive. 
  
\begin{Main} \label{main.infinite} 
For any ergodic dynamical system $(T,X,\mu)$ there exists an induced system that has a pure Lebesgue  spectrum with infinite multiplicity. 
\end{Main}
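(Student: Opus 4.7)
The plan is to adapt De La Rue's construction from \cite{delarue} to produce infinite Lebesgue multiplicity rather than just a single Lebesgue spectral component. De La Rue's original argument assembles $A$ from carefully selected levels of a sequence of Rokhlin towers; the combinatorial pattern of selected levels is designed so that, for a distinguished test function $f$ on $A$, the correlation sequence $\langle U_{T_A}^n f, f \rangle$ is asymptotic to the Fourier coefficient sequence of Lebesgue measure on $\T$, which forces the spectral measure of $f$ to be Lebesgue. I would first isolate this mechanism and identify exactly which ``building block'' of the construction is responsible for producing each unit of Lebesgue spectrum.

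To upgrade to infinite multiplicity, the strategy is to install infinitely many spectrally disjoint such building blocks inside $L^2_0(A, \mu_A)$. At each stage of the inductive construction of the Rokhlin towers, I would partition the tower levels into countably many interleaved groups and place on each group an independent copy of De La Rue's selection pattern, shifted in an appropriate way to guarantee independence of the resulting correlations. This yields a sequence $f_1, f_2, \ldots$ of test functions on $A$ satisfying, in the limit, three properties: (i) each $f_i$ has spectral measure equivalent to Lebesgue on $\T$, (ii) the cross correlations $\langle U_{T_A}^n f_i, f_j \rangle$ vanish for $i\neq j$ and all $n$, so that the cyclic subspaces $Z(f_i)$ they generate are mutually orthogonal, and (iii) together the $Z(f_i)$ span $L^2_0(A,\mu_A)$. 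The conjunction of (i) and (ii) already forces the maximal spectral type to be Lebesgue with multiplicity at least countably infinite, and (iii) then pins the multiplicity exactly at $\infty$ Lebesgue a.e.

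The main technical obstacle will be the simultaneous control of all three properties through a single diagonal limiting construction. At any finite stage only finitely many $f_i$ are active and each property holds only approximately; one must carefully balance the precision at which (i) and (ii) are enforced against the tower heights and the sizes of the tower pieces allocated to each new building block, using summable error estimates so that the approximate identities sharpen into exact ones in the limit. The exhaustion property (iii) is the most delicate part, since the room allocated to the later building blocks $f_i$ must shrink with $i$, and yet one must guarantee via a Rokhlin--Halmos type approximation that every $L^2$ function on $A$ lies in the closed direct sum $\bigoplus_i Z(f_i)$; this has to be arranged compatibly with, and without degrading, the asymptotic orthogonality and Lebesgue properties of the earlier blocks.
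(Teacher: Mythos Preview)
Your high-level picture --- build infinitely many functions on the induced space with Lebesgue spectral measure and use their orthogonality to force infinite multiplicity --- matches the paper's strategy. But two of your three target properties are stated in a form that is probably unachievable, and the paper's actual proof works precisely because it avoids them.

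First, you ask for \emph{exact} vanishing of the cross-correlations $\langle U_{T_A}^n f_i, f_j\rangle$ in the limit. Inductive constructions of this type (nested inducing sets, each step only approximately matching a target) do not produce exact identities; errors from every later stage feed back into earlier functions. The paper never attempts exact orthogonality. Instead it controls, for each pair $i<j$ and $\eta\in\{1,i\}$, the spectral measures $\sigma(f_i+\eta f_j)$ so that their densities are uniformly close to $2$ while $\sigma(f_i)$ has density close to $1$. The key step you are missing is Lemma~\ref{lem.trou}: from these \emph{approximate} conditions one derives infinite multiplicity by a finite-dimensional contradiction --- if the Lebesgue component had multiplicity $<K$, then for some $\theta$ the spectral coordinates of $f_1,\dots,f_{K+1}$ would be $K+1$ nearly orthonormal vectors in $\mathbb{C}^{K-1}$. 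This linear-algebra lemma is what converts ``almost orthogonal'' into ``infinite multiplicity,'' and without it your plan has no mechanism to close the gap between approximate and exact.

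Second, your property (iii), that $\bigoplus_i Z(f_i)$ exhaust $L^2_0(A,\mu_A)$, is both unnecessary and the wrong route to pure Lebesgue spectrum. The paper separates the two tasks: the family $\{f_j\}$ handles multiplicity via the argument above, and a \emph{second}, dense family $\{h_j\}$ of simple functions is carried along through the same sequence of inducings (Section~\ref{sec.conclusion}). Each $h_j$ is simply restricted at every stage, its spectral measure is regularised to Lebesgue by the same spreading/inducing mechanism, and density of $\{h_j^{\infty}\}$ in $L^2_0(A_\infty)$ is automatic because $h_j^{\infty}=h_j|_{A_\infty}$. Density plus ``every $h_j^\infty$ has Lebesgue spectral measure'' gives pure Lebesgue maximal type with no spanning or orthogonality required of this family. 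Trying to make a single orthogonal family do both jobs, with shrinking room for later blocks, is exactly the delicate balancing act you flag as the hardest part --- and the paper sidesteps it entirely.

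Finally, your mechanism for manufacturing new orthogonal blocks (``partition the tower levels into interleaved groups and install independent copies of the selection pattern'') does not match the actual tools. New almost-orthogonal functions are produced one at a time by Lemma~\ref{lemma.add}: pass to a product with a Bernoulli shift, take a $Y$-measurable function orthogonal to $L^2(X)$, and pull it back approximately via Rokhlin. The regularisation of spectral densities is done not by combinatorics on tower levels but by De~La~Rue's two-step engine (Propositions~\ref{prop.dlr1} and~\ref{prop.dlr2}): spread the spectral measure using the Meilijson skew product, then approximate that spread measure by inducing.
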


\section{Notations, Definitions and Preliminaries} 

\subsection{Weak and strong closeness between measures on the circle} 

\begin{definition} For $\a>0$ and $\tau>0$,  a function $\vphi \in C^0(\T,\R)$ is said to be $(\a,\tau)$-good if 
$\vphi(\cdot)\geq 0$ and $\vphi(\theta)>\a$ for $\theta$ outside $(-\tau,\tau)$.
\end{definition} 

\begin{definition}[Strong closeness of densities] Suppose $\vphi$ is $(\a,\tau)$-good. Given $\eps>0$, we say that a function $\vphi'$ is $\eps$-strongly close to $\vphi$ and denote this by $\vphi' \approx_\eps \vphi$ if 
$$\text{ for \ any \ } \theta \notin (-\tau,\tau) : \frac{1}{1+\eps} \vphi(\theta)\leq  \vphi'(\theta)\leq (1+\eps) \vphi(\theta).$$ 
\end{definition}

\begin{definition}[Weak closeness of probability measures on the circle] We equip the set of positive measures on the circle with the topology of weak convergence and denote by $d$ the distance that defines it. We denote $\sigma \sim_\rho \sigma'$ when $d(\sigma,\sigma')<\rho$.  
\end{definition}

For an absolutely continuous measure on the circle of the form  $\vphi(\theta) d\theta$, we often  abuse notations and denote the measure simply as $\vphi$. In consequence, we often will use the notation $\sigma \sim_\rho \vphi$ when the measure $\sigma$ is $\rho$-close in the weak topology to the measure $\vphi(\theta) d\theta$.

For $f\in L^2_0(X,\mu)$ we denote by $H(f)$ the cyclic space generated by the family $\{f\circ T^k\}_{k\in \Z}$.


\begin{lemma} \label{lemma.conv} Given $\tau_n\to 0$, $\a_n>0$, $\eps_n<\min(2^{-n},\a_n/2)$ and $\rho_n \to 0$, and a sequence of measures on the circle $\sigma_n$  such that 
$$\sigma_n \sim_{\rho_n} \vphi_n$$
where $\vphi_n$ is a sequence of $(\a_n,\tau_n)$-good functions such that 
$$\varphi_{n+1} \approx_{\eps_n} \vphi_n$$
then $\vphi_n$ converges on $\T\setminus\{0\}$ to a strictly positive continuous function $\vphi_\infty$ and the measures $\sigma_n$ converge weakly to the measure $\sigma$ with density $\vphi_\infty$.
\end{lemma}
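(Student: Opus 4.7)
The plan is to use the summability of $\sum_n \eps_n$ (ensured by $\eps_n<2^{-n}$) together with $\tau_n\to 0$ to obtain uniform multiplicative control on $\vphi_n$ away from $0$, derive uniform convergence of $\vphi_n$ on compact subsets of $\T\setminus\{0\}$, and then transfer this to the measures $\sigma_n$ via the triangle inequality in the metric $d$.

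First I would fix a compact $K\subset \T\setminus\{0\}$ and choose $\delta>0$ with $K\cap[-\delta,\delta]=\emptyset$. Since $\tau_n\to 0$ there is $N_K$ with $\tau_n<\delta$ for all $n\geq N_K$, so the hypothesis $\vphi_{n+1}\approx_{\eps_n}\vphi_n$ applies on $K$ from that index on. Taking logarithms, $\|\log\vphi_{n+1}-\log\vphi_n\|_{C^0(K)}\leq \log(1+\eps_n)\leq \eps_n\leq 2^{-n}$, which is summable, so $(\log\vphi_n)|_K$ is uniformly Cauchy and $\vphi_n$ converges uniformly to a continuous limit $\vphi_\infty$ on $K$. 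The $(\a_n,\tau_n)$-good property gives $\vphi_{N_K}\geq \a_{N_K}$ on $K$, and iterating the two-sided multiplicative bounds yields the uniform lower estimate $\vphi_\infty\geq \a_{N_K}\prod_{k\geq N_K}(1+\eps_k)^{-1}>0$ on $K$, the product being convergent. Exhausting $\T\setminus\{0\}$ by such compacts produces a continuous, strictly positive $\vphi_\infty$.

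For the convergence of measures, the triangle inequality
\[
d(\sigma_n,\vphi_\infty\,d\theta)\leq d(\sigma_n,\vphi_n\,d\theta)+d(\vphi_n\,d\theta,\vphi_\infty\,d\theta)<\rho_n+d(\vphi_n\,d\theta,\vphi_\infty\,d\theta)
\]
reduces the problem to showing $\vphi_n\,d\theta\to\vphi_\infty\,d\theta$ weakly. Given $f\in C(\T)$ and $\delta>0$, I would split $\int f(\vphi_n-\vphi_\infty)\,d\theta$ into integrals over $\{|\theta|\geq \delta\}$ and $\{|\theta|<\delta\}$. The piece on $\{|\theta|\geq \delta\}$ vanishes in the limit $n\to\infty$ by the uniform convergence of $\vphi_n\to\vphi_\infty$ on that set (for $n$ large enough that $\tau_n<\delta$), and by choosing $\delta$ small it approximates $\int f\vphi_\infty\,d\theta$ from $\vphi_\infty$ being locally integrable.

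The delicate step is controlling the contribution near $\theta=0$: the hypothesis $\vphi_{n+1}\approx_{\eps_n}\vphi_n$ governs $\vphi_n$ only \emph{outside} $(-\tau_n,\tau_n)$, so a priori the mass $\int_{-\tau_n}^{\tau_n}\vphi_n\,d\theta$ is unconstrained. I would close this gap using the weak closeness $\sigma_n\sim_{\rho_n}\vphi_n\,d\theta$ together with uniform boundedness of the total masses $\sigma_n(\T)$ coming from the context (the $\sigma_n$ will be spectral measures of uniformly $L^2$-bounded functions): by tightness any subsequential weak limit $\tau$ of $\sigma_n$ coincides with $\vphi_\infty\,d\theta$ on $\T\setminus\{0\}$ by the argument above, and a Fatou-type matching of total masses then forces $\tau(\{0\})=0$, so $\tau=\vphi_\infty\,d\theta$ and the full sequence converges.
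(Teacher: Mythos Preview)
Your argument for the convergence of $\vphi_n$ is exactly the paper's: summability of $\eps_n$ makes $(\log\vphi_n)$ Cauchy in $C^0$ on each $[\tau,1-\tau]$ once $\tau_n<\tau$, and the lower bound $\a_{N_K}$ survives the infinite product; the paper's proof says precisely this in one sentence and no more.

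Where you go beyond the paper is in the weak convergence of $\sigma_n$. The paper treats this as immediate from $\sigma_n\sim_{\rho_n}\vphi_n$ together with the strong convergence of $\vphi_n$ away from $0$, without addressing the possibility of mass accumulating at $0$. You correctly flag that the hypotheses give no control of $\int_{(-\tau_n,\tau_n)}\vphi_n\,d\theta$, and you close the gap by importing from the intended application the uniform bound on $\sigma_n(\T)$ (the $\sigma_n$ are spectral measures of an $L^2$-convergent sequence). That is the right move, though your ``Fatou-type matching of total masses'' is only sketched: in the actual use of the lemma one has $f^{(n)}\to f^{(\infty)}$ in $L^2$, hence $\sigma_n\to\sigma(f^{(\infty)})$ weakly with convergence of total mass, and ergodicity of the induced system forces $\sigma(f^{(\infty)})(\{0\})=0$; combined with $\sigma(f^{(\infty)})=\vphi_\infty\,d\theta$ on $\T\setminus\{0\}$ (from testing against $C_c(\T\setminus\{0\})$ and uniform convergence), this identifies the limit. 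So your approach coincides with the paper's where the paper gives an argument, and where you add detail you are patching a point the paper leaves implicit.
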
 

{\begin{proof} From $\sigma_n \sim_{\rho_n} \vphi_n$, it suffices to see that for any $\tau>0$, $\vphi_n$ converges in the strong topology on $[\tau,1-\tau]$ to a strictly positive continuous function $\varphi_\infty$. But the sequence ${\vphi_n}_{|[\tau,1-\tau]}$ is a Cauchy sequence from the fact that $\tau_n\to 0$ and from the fact that $\eps_n<\min(2^{-n},\a_n/2)$.\end{proof}}

 \begin{lemma} \label{lem.trou} Let $(T,A,\mu)$ be a dynamical system such that there exists a sequence $\eps_n\to 0$ and 
 a sequence of 
  functions $\{f_j\}_{j\in \N}$ in $L^2(A,\mu)$, and a sequence of 
 functions $\{\vphi_j\}_{j\in \N}$ in $C^0(\T,\R_+^*)$ such that for every $i \in \N$, 
 \begin{equation}\label{cond1}\sigma(f_i)=\vphi_i \approx_{\eps_i} 1;\end{equation}and for all  $1\leq i<j$, and for $\eta \in \{1,i\}$, there exists $\vphi_{i,j,\eta} \in C^0(\T,\R_+^*)$
 such that  \begin{equation} \label{cond2} 
\bbbbb \sigma(f_i+\eta f_j) = \vphi_{i,j,\eta} \approx_{\eps_i} 2 \end{equation}
then, the system $(T,A,\mu)$ has a spectral component that is Lebesgue  with infinite multiplicity. 
 \end{lemma}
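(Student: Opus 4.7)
The plan is to extract from hypotheses (\ref{cond1})--(\ref{cond2}) quantitative smallness of the cross spectral measures $\sigma_{f_i,f_j}$, and then apply a Gram--Schmidt procedure to produce an infinite family of pairwise orthogonal cyclic $U_T$-invariant subspaces whose spectral measures are each equivalent to Lebesgue.

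\textbf{Step 1 (Cross measures are small).} Since $\sigma_{g,f}=\overline{\sigma_{f,g}}$, expanding spectral measures of sums gives
\begin{align*}
\sigma(f_i+f_j)-\sigma(f_i)-\sigma(f_j)&=2\,\mathrm{Re}\,\sigma_{f_i,f_j},\\
\sigma(f_i+if_j)-\sigma(f_i)-\sigma(f_j)&=\pm 2\,\mathrm{Im}\,\sigma_{f_i,f_j}.
\end{align*}
By (\ref{cond1})--(\ref{cond2}), both left-hand sides are continuous densities $\vphi_{i,j,\eta}-\vphi_i-\vphi_j$ of $L^\infty$-norm $O(\eps_i)$. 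Hence $\sigma_{f_i,f_j}$ is absolutely continuous with continuous density $\psi_{i,j}$ satisfying $\|\psi_{i,j}\|_\infty\le C\eps_i$ for $i<j$.

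\textbf{Step 2 (Gram--Schmidt on a sparse subsequence).} After passing to a subsequence, assume $\sum_k\eps_{i_k}^2$ is as small as desired. Set $g_1=f_{i_1}$ and inductively $g_k=f_{i_k}-P_{V_{k-1}}f_{i_k}$ with $V_{k-1}=H(g_1)\oplus\cdots\oplus H(g_{k-1})$. Since $V_{k-1}^\perp$ is $U_T$-invariant, $H(g_k)\subseteq V_{k-1}^\perp$, so the cyclic subspaces $H(g_k)$ are pairwise orthogonal.

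\textbf{Step 3 (Lebesgue equivalence).} Because $g_k\in V_{k-1}^\perp$ and $P_{V_{k-1}}f_{i_k}\in V_{k-1}$ lie in orthogonal $U_T$-invariant subspaces, $\sigma(f_{i_k})=\sigma(g_k)+\sigma(P_{V_{k-1}}f_{i_k})$, and $\sigma(P_{V_{k-1}}f_{i_k})=\sum_{l<k}\sigma(h_l)$ where $h_l=P_{H(g_l)}f_{i_k}$. The spectral isomorphism $H(g_l)\cong L^2(\T,\sigma(g_l))$ identifies $h_l$ with $F_l=d\sigma_{g_l,f_{i_k}}/d\sigma(g_l)$, so $\sigma(h_l)$ has density $|\psi_{g_l,f_{i_k}}|^2/\vphi_{g_l}$. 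Inductively $g_l-f_{i_l}=O(\sum_{m<l}\eps_{i_m}^2)$ in $L^2$, which transfers the bound from Step 1 to $\|\psi_{g_l,f_{i_k}}\|_\infty=O(\eps_{i_l})$; assuming $\inf\vphi_{g_l}\ge 1/2$ this yields $\|\vphi_{h_l}\|_\infty=O(\eps_{i_l}^2)$. Summing and using the subsequence property, $\vphi_{g_k}=\vphi_{i_k}-\sum_{l<k}\vphi_{h_l}$ stays bounded between two positive constants, so $\sigma(g_k)$ is equivalent to Lebesgue, closing the induction.

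\textbf{Step 4 (Conclusion) and main obstacle.} The invariant subspace $\cH=\overline{\bigoplus_{k\ge 1}H(g_k)}$ is unitarily equivalent to $L^2(\T,d\theta)\otimes\ell^2(\N)$ with $U_T$ acting as multiplication by $e^{2\pi i\theta}\otimes\mathrm{Id}$, yielding pure Lebesgue spectrum of infinite multiplicity. The main difficulty is Step 3: only an $L^\infty$-bound on cross spectral densities is available, so one must carefully maintain both the uniform lower bound $\inf\vphi_{g_l}\ge 1/2$ and the perturbation-stability of the cross densities through the Gram--Schmidt induction, ensured by taking $\eps_{i_k}$ to decrease sufficiently fast.
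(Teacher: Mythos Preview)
Your approach is genuinely different from the paper's. The paper argues by contradiction: assuming the Lebesgue multiplicity is finite, it picks the first $K$ with $\mathrm{Leb}(C_K)<1$ in the spectral decomposition $\bigoplus_k L^2(\T,\chi_{C_k}d\theta)$, takes $K+1$ of the $f_j$ with very large index, and projects them onto this decomposition. At any $\theta\in C_K^c$ the projections give $K+1$ vectors in a fixed finite-dimensional space that are (by \eqref{cond1}--\eqref{cond2}) almost orthonormal, which is impossible. No Gram--Schmidt and no infinite recursion are involved; only finitely many functions and only weak (integral) bounds are used.

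Your constructive route via Gram--Schmidt is appealing, but Step~3 has a real gap. The sentence ``Inductively $g_l-f_{i_l}=O(\sum_{m<l}\eps_{i_m}^2)$ in $L^2$, which transfers the bound from Step~1 to $\|\psi_{g_l,f_{i_k}}\|_\infty=O(\eps_{i_l})$'' is not justified: $L^2$-smallness of $g_l-f_{i_l}$ yields at best an $L^1$ bound on the cross density $\psi_{g_l-f_{i_l},f_{i_k}}$ (via $\|\psi_{u,v}\|_1\le\|u\|_2\|v\|_2$), not an $L^\infty$ bound. If instead you use the explicit recursion from the spectral model,
\[
\psi_{g_l,f_{i_k}}=\psi_{f_{i_l},f_{i_k}}-\sum_{m<l}\frac{\overline{\psi_{g_m,f_{i_l}}}\,\psi_{g_m,f_{i_k}}}{\varphi_{g_m}},
\]
then setting $D_l:=\sup_{j>l}\|\psi_{g_l,f_{i_j}}\|_\infty$ and using $\varphi_{g_m}\ge 1/2$ gives only $D_l\le C\eps_{i_l}+2\sum_{m<l}D_m^2$. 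This recursion does not close: if $S:=\sum_m D_m^2$ were finite and positive, then $D_l\ge 2S-C\eps_{i_l}\to 2S>0$, forcing $\sum D_l^2=\infty$. Hence no choice of subsequence makes this crude bookkeeping maintain $\inf\varphi_{g_k}\ge 1/2$, and you cannot conclude that each $\sigma(g_k)$ is equivalent to Lebesgue. The scheme may be salvageable (for instance by exploiting the cancellations in the recursion rather than taking absolute values, or by relaxing to ``$\varphi_{g_k}(\theta)>0$ for infinitely many $k$ at a.e.\ $\theta$'' via a Borel--Cantelli argument), but that is substantial extra work which your sketch does not supply. The paper's finite contradiction argument sidesteps this difficulty entirely.
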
 
 
 \begin{proof} Condition \eqref{cond1}
 implies that  the system $(T,A,\mu)$ has a spectral component for which the maximal spectral type is Lebesgue. 
 Let $\bigoplus_{1}^\infty L^{2}(\mathbb{T},\mu_k)$
 be the spectral decomposition of the latter component where $\mu_1=d\theta$, $\mu_1\gg \mu_2 \gg \ldots$. We can take $\mu_k=\chi_{C_k}(\theta)d\theta$ where $\{C_k\}$ is a sequence of nested measurable subsets of the circle. 

If the multiplicity of the Lebesgue component is not infinite, there exists $K$ such that ${\rm Leb}(C_K)<1$. We assume that this holds, and take the first $K\geq 2$ with this property, and we will get a contradiction. 
 
Let $\hat{f}_l=f_{N+l}$, $l\in \{1,\ldots,K+1\}$ for some $N\gg 1$ to be determined later. The goal from choosing $N$ large is to have due to \eqref{cond1} and \eqref{cond2} that the $\hat{f}_l$ are pairwise almost orthogonal while the densities of their spectral measures are almost equal to $1$. This will show that by choosing $N$ sufficiently large, we get a contradiction with the assumption that ${\rm Leb}(C_K)<1$.

 Note that since the spectral measure of every $\hf_l$ is absolutely continuous with respect to Lebesgue, they all spectrally belong to $\bigoplus_{1}^\infty L^{2}(\mathbb{T},\mu_k)$. For every $l\in \{1,\ldots,K+1\}$, let $\hf_{l}^{1},\hf_{l}^{2},\ldots$ denote the successive orthogonal projections of $\hf_{l}$ on $\bigoplus L^{2}(\mathbb{R},\mu_{k})$. 

For any $\eps>0$, if $N$ is chosen sufficiently large,   \eqref{cond1} and \eqref{cond2} imply that for any pair $i\neq j \in \{1,\ldots,K+1\}^2$, and any $n\in \N$, and $\eta \in \{0,1,i\}$
\begin{align} \label{pol1} \left|\langle \hf_i+\eta \hf_j,(\hf_i+\eta \hf_j)\circ T^n \rangle\right|<\eps, \end{align}
which gives  
\begin{equation} \label{spec44} \left|\langle \hf_i,\hf_j\circ T^n \rangle\right|<\eps. \end{equation}
 From the spectral isomorphism this yields
 \begin{equation} \label{spec1} \eps> \left|\langle \hf_i,\hf_j\circ T^n \rangle\right|= \left|\sum_{k=1}^{\infty}\hf_i^{k}(\theta)\overline{{\hf_j^{k}}}(\theta)e(-n\theta) \phi_k(\theta)d\theta\right|. \end{equation}
 
Since $\eps$ can be chosen arbitrarily small (after $K$ and $C_K$ are given), this implies that for more than $99\%$ of $\theta \in C_K^c$, for any pair $i\neq j \in \{1,\ldots,K+1\}^2$,
 \begin{equation} \label{spec2}  \left|\sum_{k=1}^{K}\hf_i^{k}(\theta)\overline{{\hf_j^{k}}}(\theta)\right| <\sqrt{\eps}. \end{equation}
 
 On the other hand by \eqref{cond1} and the spectral isomorphism, we have for any $l \in \{1,\ldots,K+1\}$ 
 \begin{equation} \label{perp} \sum_{k=1}^{\infty} |\hf^{k}_{l}(\theta)|^2\phi_{k}(\theta) =\vphi_i(\theta) \approx_{\eps} 1. \end{equation}
Hence, for more than $99\%$ of $\theta \in C_K^c$, for any $l \in \{1,\ldots,K+1\}$, it holds that 
 \begin{equation}\label{norm}\sum_{k=1}^{K} |\hf^{k}_{l}(\theta)|^2 \in [1-\sqrt{\eps},1+\sqrt{\eps}]. \end{equation}

In conclusion, there exists $\theta \in C_K^c$ for which both \eqref{perp} and \eqref{norm} hold. A contradiction, because $K+1$ vectors in ${\mathbb C}^K$ cannot have all norm almost $1$ and be almost orthogonal.
 \end{proof} 
 \black

\subsection{Simple functions} 

In all the note, we will only consider functions $f$ on the space $(X,\mu)$ or the inductions spaces $(A,\mu_{A})$ that are simple in the sense that $f$ is  constant on the atoms of a finite measurable partition $\mathcal P$ of $A$, and that the average of $f$ is $0$. We will denote this by $f \in \simmple(A)$. 
{ This is useful to make sure that when inducing the function $f$ on a subset $A'\subset A$ as $f':=f_{|A}$, then we can guarantee that ${av}_{\mu_A}(f')=0$ provided $A'$ is independent of the partition $\mathcal P$ that defines $f$. }

\subsection{Criterion for the convergence of a sequence of induced systems to a system with an infinite Lebesgue component.}

We always assume given an ergodic dynamical system $(T,X,\mu)$. We will use the following criterion that allows to construct a measurable set $A \subset X$ such that the system $(T_{A},A,\mu_{A})$ has a Lebesgue component of infinite multiplicity in its spectrum.

\begin{proposition} \label{lemma.convergence}  Assume given  $\tau_n\to 0$, $\a_n>0$, $\eps_n<\min(2^{-n},\a_n/2)$ and $\rho_n \to 0$, and a nested sequence of measurable sets $A_n$  such that $\mu(A_{n-1} \setminus A_n)<\eps_{n}$. 

Assume also given  arrays of functions in $C^0(\T,\R_+)$, $\{\vphi_j^{(n)}\}_{j\in [1,n]}$, $\{\vphi_{i,j,\eta}^{(n)}\}_{1\leq i<j\leq n,\eta \in \{1,i\}}$, $n\geq 1$, such that 
\begin{itemize}
\item[$(A1_n)$] For all $j\in [1,n]$, $\vphi^{(n)}_j$ is $(\a_n,\tau_n)$-good,
\item[$(A2_n)$] For all $j\leq n-1$, for all $1\leq i<j\leq n-1$ and for $\eta \in \{1,i\}$
$$\vphi^{(n)}_j  \approx_{\eps_n} \vphi^{(n-1)}_j, \quad \vphi^{(n)}_{i,j,\eta}  \approx_{\eps_n} \vphi^{(n-1)}_{i,j,\eta}, \quad \vphi^{(n)}_{j,n,\eta}  \approx_{\eps_n} \vphi^{(n)}_j+1, \quad \vphi^{(n)}_n\approx_{\eps_n} 1,$$

\end{itemize} 
and an array of functions $\{f_j^{(n)}\}_{j\in [1,n]} \in \simmple(A_n)$, $n\geq 1$, such that  {$A_n$ is orthogonal to $\{f_j^{(n-1)}\}_{j\in [1,n-1]}$, and } for every $j\in [1,n-1]$
\begin{itemize}
\item[$(A3_n)$]  $\|f_j^{(n)} -{f_j^{(n-1)}}_{|A_n}\|_2 \leq {\eps_n}$;
\item[$(A4_n)$] For the induced system $(T_{|A_n},A_n,\mu_{A_n})$, for all $j\leq n$, for all $1\leq i<j\leq n$, and for $\eta \in \{1,i\}$,
\begin{equation*}
\bbbbb \sigma(f^{(n)}_j) \sim_{\rho_n} \vphi^{(n)}_{j}, \quad \bbbbb \sigma(f^{(n)}_i+\eta f^{(n)}_j) \sim_{\rho_n} \vphi^{(n)}_{i,j,\eta}; \end{equation*}
\end{itemize} 
then, the limiting system $T_{|A_\infty}$ has a spectral component that is Lebesgue  with infinite multiplicity. 
\end{proposition}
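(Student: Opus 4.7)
The plan is to construct, inside $A_\infty := \bigcap_n A_n$, a countable family of functions $\{f_j\}_{j\in\N} \subset L^2(A_\infty, \mu_{A_\infty})$ satisfying the hypotheses of Lemma~\ref{lem.trou} for the induced system $(T_{A_\infty}, A_\infty, \mu_{A_\infty})$. Since $\mu(A_{n-1}\setminus A_n) < \eps_n < 2^{-n}$, the set $A_\infty$ has positive measure and $\mu(A_n)/\mu(A_\infty)\to 1$. For each fixed $j$, condition $(A3_n)$ gives $\|f_j^{(n)}-f_j^{(n-1)}\|_{L^2(A_n,\mu_{A_n})}\leq\eps_n$, which after restriction to $A_\infty\subset A_n$ and renormalization provides a summable bound in $L^2(A_\infty,\mu_{A_\infty})$. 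The restrictions therefore converge in $L^2(A_\infty,\mu_{A_\infty})$ to a limit $f_j$; the orthogonality of $A_n$ to the partition defining $f_j^{(n-1)}$ preserves the mean-zero property through each restriction, so $\int f_j\, d\mu_{A_\infty}=0$.

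Hypotheses $(A1_n)$--$(A2_n)$ place the families $\vphi_j^{(n)}$ and $\vphi_{i,j,\eta}^{(n)}$ precisely in the setting of Lemma~\ref{lemma.conv}: each is $(\a_n,\tau_n)$-good and consecutive terms are $\eps_n$-strongly close. Together with the weak closeness in $(A4_n)$, Lemma~\ref{lemma.conv} produces continuous strictly positive limits $\vphi_j^{(\infty)}$ and $\vphi_{i,j,\eta}^{(\infty)}$ on $\T\setminus\{0\}$ toward which the spectral measures $\sigma(f_j^{(n)})$ and $\sigma(f_i^{(n)}+\eta f_j^{(n)})$, computed on $(T_{A_n},A_n,\mu_{A_n})$, converge weakly. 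Iterating the strong-closeness chain and using $\vphi_n^{(n)}\approx_{\eps_n}1$ and $\vphi_{j,n,\eta}^{(n)}\approx_{\eps_n}\vphi_j^{(n)}+1$, one obtains $\vphi_j^{(\infty)}\approx_{\bar{\eps}_j}1$ and $\vphi_{i,j,\eta}^{(\infty)}\approx_{\bar{\eps}_i}2$ with $\bar{\eps}_j:=\sum_{k\geq j}\eps_k\to 0$.

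The decisive step is to identify $\sigma(f_j)$ and $\sigma(f_i+\eta f_j)$, computed in $(T_{A_\infty}, A_\infty, \mu_{A_\infty})$, with the weak limits above. By Fourier uniqueness on the circle, it is enough to prove, for every $k\in\Z$,
\[
\int_{A_\infty} f_j\cdot (f_j\circ T_{A_\infty}^k)\, d\mu_{A_\infty}=\lim_{n\to\infty}\int_{A_n}f_j^{(n)}\cdot (f_j^{(n)}\circ T_{A_n}^k)\, d\mu_{A_n},
\]
and the analogous equality for $f_i+\eta f_j$. This combines the $L^2$-convergence above, together with the uniform sup-norm bound coming from simplicity, with the fact that, by transitivity of induction, $T_{A_n}^k$ and $T_{A_\infty}^k$ agree on $A_\infty$ outside a subset of measure $O(k\sum_{m>n}\eps_m)$, since each return to $A_n$ is almost surely a return to $A_\infty$ once $A_n\setminus A_\infty$ is small. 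I expect this comparison between the two inductions to be the main technical obstacle, requiring careful book-keeping of excursions through $A_n\setminus A_\infty$.

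Once this identification is established, the family $\{f_j\}_{j\in\N}$ on $(T_{A_\infty}, A_\infty, \mu_{A_\infty})$ satisfies $\sigma(f_j)=\vphi_j^{(\infty)}\approx_{\bar{\eps}_j}1$ and $\sigma(f_i+\eta f_j)=\vphi_{i,j,\eta}^{(\infty)}\approx_{\bar{\eps}_i}2$ with $\bar{\eps}_j\to 0$, which are exactly the hypotheses of Lemma~\ref{lem.trou}. The latter then yields a Lebesgue spectral component of infinite multiplicity for $T_{A_\infty}$, as required.
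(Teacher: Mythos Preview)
Your proposal is correct and follows essentially the same route as the paper: pass to $L^2$-limits $f_j^{(\infty)}$ on $A_\infty$, invoke Lemma~\ref{lemma.conv} via $(A1_n)$--$(A2_n)$--$(A4_n)$ to control the limiting densities, chain the strong-closeness relations to get $\vphi_j^{(\infty)}\approx 1$ and $\vphi_{i,j,\eta}^{(\infty)}\approx 2$, and conclude with Lemma~\ref{lem.trou}. The paper's own proof is actually terser than yours; in particular, it simply asserts the identification $\sigma(f_j^{(\infty)})=\vphi_j^{(\infty)}$ without discussing the comparison of $T_{A_n}$ with $T_{A_\infty}$ that you (rightly) single out as the point requiring care, and it tracks the approximation constants slightly differently (writing $\approx_{3\eps_j}$ rather than your $\bar\eps_j=\sum_{k\ge j}\eps_k$, which is the more honest bookkeeping).
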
 

\begin{proof} From the fact that $\mu(A_{n-1} \setminus A_n)<\eps_{n}$ and 
 $(A3_n)$ we see that the system $(T_{|A_\infty},A_\infty,\mu_{A_\infty})$ is well defined and that for every $i$,  
$f^{(n)}_i$ converges in $L^2_0(A_\infty)$ to a function $f^{(\infty)}_i$. 

By $(A1_n), (A2_n), (A4_n)$ with $\eta=0$, Lemma \ref{lemma.conv} implies that for every $j \in \N$, $\sigma(f^{(\infty)}_j)=\vphi^{(\infty)}_j$ is equivalent to the Lebesgue measure on the circle, and $\vphi^{(\infty)}_j  \approx_{\eps_j} 1$. 

Finally $(A4_n)$ and $(A2_n)$ with $\eta\in \{1,i\}$ implies that for $i<j$ \begin{equation*}
\bbbbb \sigma(f^{(\infty)}_i+\eta f^{(\infty)}_j) = \vphi^{(\infty)}_{i,j,\eta} \approx_{\eps_j} \vphi^{(j)}_{i,j,\eta} \approx_{2\eps_j} \vphi^{(j)}_{i}+1\approx_{3\eps_j} \vphi^{(\infty)}_{i}+1 \approx_{3\eps_i} 2 \end{equation*}
 which by Lemma \ref{lem.trou} implies that $(T_{|A_\infty},A_\infty,\mu_{A_\infty})$ has  a spectral component that is Lebesgue  with infinite multiplicity.  
 \end{proof}

\subsection{De La Rue's strategy for inducing Lebesgue spectrum}

In \cite{delarue} the following strategy to induce a Lebesgue spectrum. First of all, one shows how, starting from a simple function $f$, it is possible to induce on a set $A_\infty$ to get a function $f^{(\infty)}$ with spectral measure equivalent to Lebesgue. 

For this purpose $A_\infty$ is constructed inductively as a limit of nested sets $A_n$ and $f^{(\infty)}$ is the limit of the sequence $f^{(n)}:={f^{(n-1)}}_{|A_n}$, $f^{(0)}:=f$. 

Each step of the inductive procedure relies on two mechanisms. First, spread out the spectral measure of $f^{(n)}$ into a measure that is equivalent to Lebesgue using  the Meilijson skew products (\cite{meilijson}) of 
$T$ and $T^2$ above the Bernoulli shift on  $\{1,2\}^\Z$. 
  Second, induce on a set $A_{n+1} \subset A_n$ so that the spectral measure of $f^{(n+1)}= {f^{(n)}}_{|A_{n+1}}$ is as close as desired  in the weak distance to the spread out measure (inducing can imitate Bernoulli convolution). While doing so inductively, it is possible to make sure that the densities of the spread out measures at each step $n$ are converging in the strong sense of Lemma \ref{lemma.conv}, which guarantees  that the spectral measure of $f^{(\infty)}$ for the system $T_{A_\infty}$ is equivalent to Lebesgue. 

Starting from a dense countable family of functions in $\simmple(X,\mu)$ and performing the regularizing induction simultaneously for all the functions, one thus gets a limit system with maximal spectral type equivalent to Lebesgue (see Section \ref{sec.conclusion} where we come back on this density argument since we will need it to conclude our proof of Theorem \ref{main.infinite}). 

The two main ingredients of \cite{delarue} that we just discussed are summarized in the following two propositions that will also be crucial in our proof of Theorem \ref{main.infinite}.

For $f \in L^2_0(X)$ we use the notation $\sigma(f_\delta)$ for the measures $\sigma(f)_\delta$ as in Definition 3 of \cite{delarue}. These are the spreadout measures coming from the Meilijson skew products. For the convenience of the reader we include the definition.

\begin{definition} Given a positive measure $\sigma$ on $\T$ such that $\sigma(\{0\})=0$, and given $\delta>0$, we define a positive measure $\sigma_\delta$ on $\T$ by its Fourier coefficients
\begin{align*}\widehat{\sigma_\delta}(0)&=\sigma(\T) 
\\ \widehat{\sigma_\delta}(p)&=\int_\T z_\delta(\tau)^p d\sigma(\tau), \quad \forall p>0 \\
\widehat{\sigma_\delta}(p)&=\overline{\widehat{\sigma_\delta}({-p})}, \quad \forall p<0
\end{align*}
where $z_\delta(\tau)=(1-\delta)e^{-i\tau}+\delta e^{-2i\tau}$.

Given an ergodic dynamical system 
$(T,X,\mu)$ and  $f \in L^2_0(X)$ and let $\sigma$  be the spectral measure on  $\T$ associated to $f$, then we define  $\sigma(f_\delta):=\sigma_\delta$.

\end{definition} 

\begin{proposition}[Weak closeness implies strong closeness for the spread out densities. {\cite[Lemme 8]{delarue}}] \label{prop.dlr1}
Let $(\vphi_j)$, $j=1,\ldots,K$ be a finite family of $(\a,\tau)$-good functions. For any $\eps>0$, for all  $\delta<\delta(\a,\tau,\eps)$, there exists $\rho>0$ such that if $f_1,\ldots,f_K$ are simple functions such that $\sigma(f_j) \sim_\rho \varphi_j$, then the densities $\vphi_{j,\delta}$ of $\sigma(f_{j,\delta})$ are strictly positive continuous functions on $(0,1)$ and satisfy   $\vphi_{j,\delta} \approx_\epsilon \vphi_j$. 

\end{proposition}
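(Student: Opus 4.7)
The plan is to establish the proposition in three steps. First, I would derive a Poisson-kernel representation of the density $\vphi_{j,\delta}$: summing the Fourier series that defines $\sigma_\delta$, one obtains for $\theta \neq 0$
\begin{equation*}
\vphi_{j,\delta}(\theta) \;=\; \frac{1}{2\pi}\int_\T \frac{1-|z_\delta(\tau')|^2}{|1 - z_\delta(\tau')e^{i\theta}|^2}\, d\sigma(f_j)(\tau') \;=:\; \int_\T K_\delta(\theta,\tau')\, d\sigma(f_j)(\tau').
\end{equation*}
Since $z_\delta(0)=1$ while $|z_\delta(\tau')|<1$ for $\tau'\neq 0$, the numerator vanishes at $\tau'=0$ and the denominator is bounded below for $\theta\notin(-\tau,\tau)$. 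Hence $K_\delta(\theta,\cdot)$ is a bounded continuous function on $\T$ whose $C^0$ and Lipschitz norms admit a common upper bound $C(\delta,\tau)$ uniform in $\theta$ outside $(-\tau,\tau)$; in particular $\vphi_{j,\delta}$ is continuous on $\T\setminus\{0\}$.

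Second, I would show that the Meilijson spread-out moves $(\a,\tau)$-good densities very little. A direct binomial expansion of $z_\delta(\tau)^p$ shows that every nonzero Fourier coefficient of $(d\theta)_\delta$ vanishes, so Lebesgue is fixed: $(d\theta)_\delta=d\theta$. Writing $z_\delta(\tau')=r_\delta(\tau')e^{i\psi_\delta(\tau')}$, one has $r_\delta(\tau')\to 1$ and $\psi_\delta(\tau')\to-\tau'$ uniformly as $\delta\to 0$, so $K_\delta(\theta,\cdot)$ behaves like a Poisson approximate identity concentrated at $\tau'=\theta$ with width of order $\delta$. Combined with the uniform continuity of each $\vphi_j$ on $\T$, this yields a threshold $\delta_0=\delta_0(\a,\tau,\eps)$ such that, for every $j\leq K$ and every $\delta<\delta_0$,
\begin{equation*}
(\vphi_j\,d\theta)_\delta \;\approx_{\eps/2}\; \vphi_j \qquad \text{on } \T\setminus(-\tau,\tau).
\end{equation*}

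With $\delta<\delta_0$ fixed, the third step is a duality argument. Using the Lipschitz bound $C(\delta,\tau)$ from Step~1, the definition of the weak metric $d$ supplies $\rho=\rho(\delta,\a,\tau,\eps)>0$ such that, whenever $\sigma(f_j)\sim_\rho \vphi_j$ for all $j\leq K$,
\begin{equation*}
\bigl|\vphi_{j,\delta}(\theta) - (\vphi_j\,d\theta)_\delta(\theta)\bigr| \;=\; \biggl|\int_\T K_\delta(\theta,\tau')\bigl(d\sigma(f_j)(\tau')-\vphi_j(\tau')\,d\tau'\bigr)\biggr| \;<\; \tfrac{\a\eps}{4},
\end{equation*}
uniformly in $\theta\notin(-\tau,\tau)$. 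Combined with Step~2 and the lower bound $\vphi_j\geq\a$, this yields $\vphi_{j,\delta}\approx_\eps \vphi_j$, and in particular strict positivity of $\vphi_{j,\delta}$ on $\T\setminus\{0\}$. The hardest part is the quantitative Step~2: the fact that Meilijson fixes Lebesgue is a one-line Fourier computation, but upgrading it into a uniform strong estimate for a general $(\a,\tau)$-good density requires matching the Poisson-kernel concentration scale against the modulus of continuity of each $\vphi_j$; Step~3 is then a standard duality bound, and Step~1 is essentially bookkeeping with Fourier series.
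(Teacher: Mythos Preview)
The paper does not supply its own proof of this proposition: it is quoted as \cite[Lemme 8]{delarue} and used as a black box, so there is no in-paper argument to compare against.

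Your sketch is correct and is essentially the natural route (and, as far as one can infer, the one taken in \cite{delarue}). The kernel identity in Step~1 is exactly what one obtains by summing the geometric series $\sum_{p\ge 1}(z_\delta(\tau')e^{i\theta})^p$; the explicit formula $1-|z_\delta(\tau')|^2=2\delta(1-\delta)(1-\cos\tau')$ makes the claimed boundedness and Lipschitz control of $K_\delta(\theta,\cdot)$ transparent once $\theta$ is kept outside $(-\tau,\tau)$. Step~2 is, as you say, the only place needing real work: after noting that $K_\delta(\theta,\cdot)\ge 0$ and $\int K_\delta(\theta,\tau')\,d\tau'=1$ (your ``Lebesgue is fixed'' computation), the approximate-identity bound reduces to showing that, uniformly for $\theta\notin(-\tau,\tau)$, the mass of $K_\delta(\theta,\cdot)$ outside any fixed neighbourhood of $\tau'=\theta$ tends to $0$ with $\delta$; this follows from the explicit numerator/denominator above. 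Step~3 is then a routine duality estimate. One small addendum: strict positivity of $\vphi_{j,\delta}$ on the whole of $(0,1)$, not just outside $(-\tau,\tau)$, comes for free from the kernel representation, since $K_\delta(\theta,\tau')>0$ for every $\theta\ne 0$ and $\tau'\ne 0$, and $\sigma(f_j)$ is a nonzero positive measure with no atom at $0$.
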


\begin{proposition}[Approaching the spread densities by inducing, {\cite[Proposition 7]{delarue}}] \label{prop.dlr2} If $\delta\in (0,1/2)$, and $f_1,\ldots,f_K$ are simple functions, then for any $\rho>0$ there exists $A$  such that $\mu(A^c)<2\delta$ and for $f'_j={f_j}_{|A}$ and $(T_A,A,\mu_A)$ we have
 $$\sigma(f'_j)  \sim_\rho \sigma(f_{j,\delta}).$$
 Moreover, $A$ can be chosen to  be independent of the functions $f_1,\ldots,f_K$ such that the functions $f'_j$ are simple. 

\end{proposition}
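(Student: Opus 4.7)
The plan is to simulate the Meilijson skew product $(\hat T, X\times\{1,2\}^{\Z}, \mu\otimes\beta_\delta)$, with $\beta_\delta$ the $(1-\delta,\delta)$--Bernoulli measure, by the first return map $T_A$ on a carefully chosen set $A\subset X$, arranging that the successive return times $r_0(x), r_1(x),\ldots$ under $T_A$ mimic an i.i.d.\ sequence $(\xi_0,\xi_1,\ldots)$ with $\mathbb{P}(\xi=1)=1-\delta$ and $\mathbb{P}(\xi=2)=\delta$. This is the correct target because, setting $R_p=r_0+\cdots+r_{p-1}$ and expanding $z_\delta(\tau)^p=((1-\delta)e^{-i\tau}+\delta e^{-2i\tau})^p$, one has
$$\widehat{\sigma(f_{j,\delta})}(p) \;=\; \int_\T z_\delta(\tau)^p\,d\sigma(f_j)(\tau) \;=\; \sum_{m=p}^{2p}\mathbb{P}(\xi_0+\cdots+\xi_{p-1}=m)\,\langle f_j, f_j\circ T^m\rangle.$$
Since the weak metric $d$ is controlled by finitely many Fourier coefficients up to any prescribed error, it suffices to match, for each $p\leq P_0=P_0(\rho)$ and each $j$, the quantity $\widehat{\sigma(f'_j)}(p)$ with the right-hand side above.

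For the construction of $A$, let $\mathcal{P}$ be the finite partition on which all the $f_j$ are constant. I would fix a deterministic word $\omega=(\omega_0,\ldots,\omega_{L-1})\in\{1,2\}^L$ whose empirical joint distribution on windows of length $p\leq P_0$ is $\eps$--close to $\beta_\delta^{\otimes p}$. Using a quantitative Rokhlin lemma, I pick a base $B$ such that $B, TB,\ldots,T^{N-1}B$ are pairwise disjoint of combined measure at least $1-\rho/10$, where $N=\omega_0+\cdots+\omega_{L-1}$, and such that the conditional distribution on $B$ of the $\mathcal P$--name $(y,Ty,\ldots,T^{N-1}y)$ approximates, in total variation, the $T$--stationary distribution on $\mathcal P^{N}$. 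Setting $i_0=0$ and $i_{k+1}-i_k=\omega_k$, define
$$A \;:=\; \bigsqcup_{k=0}^{L-1} T^{i_k}B \;\sqcup\;\Big(X\setminus\bigsqcup_{i=0}^{N-1}T^iB\Big).$$
A direct count gives $\mu(A^c)\approx L\delta\mu(B)\approx \delta/(1+\delta)<2\delta$, and since $A$ depends only on the tower and $\omega$ (no $f_j$--specific input), each $f'_j={f_j}_{|A}$ lies in $\simmple(A)$ with a common refining partition, independently of the particular $j$.

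For the spectral comparison, for $x=T^{i_k}y$ with $y\in B$ and $k\leq L-1-p$ one has $R_p(x)=\omega_k+\cdots+\omega_{k+p-1}$, so grouping by the value $m$ of this sum yields
$$\widehat{\sigma(f'_j)}(p) \;=\; \frac{1}{\mu(A)}\int_A f_j(x)\,\overline{f_j(T^{R_p(x)}x)}\,d\mu \;\approx\; \sum_{m=p}^{2p}\nu_L(m)\,\langle f_j, f_j\circ T^m\rangle,$$
where $\nu_L(m)=L^{-1}\#\{k:\omega_k+\cdots+\omega_{k+p-1}=m\}$ approximates $\mathbb{P}(\xi_0+\cdots+\xi_{p-1}=m)$ by typicality of $\omega$. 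Combining the two displays gives $\widehat{\sigma(f'_j)}(p)\to\widehat{\sigma(f_{j,\delta})}(p)$ as $L\to\infty$, uniformly in $j\in\{1,\ldots,K\}$.

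The main obstacle is justifying the $\approx$ in the previous display, i.e.\ the uniform approximation
$$\frac{1}{\mu(B)}\int_B f_j(T^{i_k}y)\,\overline{f_j(T^{i_k+m}y)}\,d\mu(y) \;\approx\; \langle f_j, f_j\circ T^m\rangle$$
for $k\leq L-p$ and $m\leq 2P_0$. Since $f_j$ is $\mathcal P$--measurable, the left-hand side depends only on the joint distribution over $B$ of the $\mathcal P$--names at heights $i_k$ and $i_k+m$, and the quantitative Rokhlin construction selected $B$ precisely so that this distribution is close to the stationary one. Granted this approximate independence, the previous displays combine to yield $\sigma(f'_j)\sim_\rho\sigma(f_{j,\delta})$, and the remainder is finite bookkeeping over the $K$ functions, all of which factor through the same partition $\mathcal{P}$.
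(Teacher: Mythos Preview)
The paper does not give its own proof of this proposition: it is quoted verbatim from \cite[Proposition~7]{delarue} and used as a black box. So there is no in-paper argument to compare against; your sketch is a reconstruction of de~la~Rue's original proof.

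Your outline is the right one and matches de~la~Rue's strategy: expand $z_\delta(\tau)^p$ to identify $\widehat{\sigma(f_{j,\delta})}(p)$ as a binomial average of the correlations $\langle f_j,f_j\circ T^m\rangle$, then build $A$ from selected levels of a tall Rokhlin tower so that the successive return times reproduce the same binomial statistics. The computation of $\mu(A^c)\approx\delta/(1+\delta)$ and the observation that $A$ is a union of full tower levels (hence independent of $\mathcal P$ once the base $B$ is chosen independent of $\bigvee_{i=0}^{N-1}T^{-i}\mathcal P$) are both correct, and the latter is exactly what makes the ``moreover'' clause go through.

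Two points deserve a little more care than you give them. First, the step you flag as ``the main obstacle'' is in fact handled cleanly by the Rokhlin lemma \emph{with independence}: one may choose the base $B$ exactly independent of the finite partition $\bigvee_{i=0}^{N-1}T^{-i}\mathcal P$, so that for $f_j$ which is $\mathcal P$-measurable the identity $\tfrac{1}{\mu(B)}\int_B f_j(T^{i_k}y)\overline{f_j(T^{i_k+m}y)}\,d\mu=\langle f_j,f_j\circ T^m\rangle$ holds on the nose, not just approximately. Second, the boundary contributions (the top $P_0$ levels of the tower and the error set you append to $A$) have measure $O(P_0/N)+\rho/10$, and since $P_0$ is fixed before $L$ (hence $N$) is sent to infinity, these are harmless; you should say so explicitly rather than absorbing them into a generic ``$\approx$''. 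With those two clarifications your argument is complete and coincides with de~la~Rue's.
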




\subsection{Adding one almost orthogonal function to a family}

In our inductive construction to prove  Theorem \ref{main.infinite} based on Lemma \ref{lemma.convergence}, we will need to introduce at each step $n$ an additional function to the almost orthogonal functions already constructed so that at the end we guarantee an infinite multiplicity. For this, we will need the following simple lemma.

The following lemma shows that it is possible to import spectral multiplicity in the weak sense. The regularizing lemma transforms it into an actual increase of the spectral multiplicity.

\begin{lemma}  \label{lemma.add} Let $(X,T)$ be an ergodic dynamical system. Given $\rho > 0$ and a family of $K$ functions $ \{f_j \}_{ j=1,2,...K}$ $\in L^2(X)$, there exists a simple function $f_{k+1}$ such that :
\begin{itemize} 
\item[(a)]  $\sigma (f_{K+1})  \sim_{\rho} 1$
\item[(b)]  for all $\eta \in \{-1, +1\}$, for all $j= 1,2, ...K$,
$$\sigma (f_{K+1} + \eta f_j) \sim_{\rho} \sigma(f_j) +1$$
\end{itemize}
\end{lemma}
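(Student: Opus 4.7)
My plan is to build $f_{K+1}$ as a mean-zero step function on the levels of a tall Rokhlin tower, with $\pm 1$ signs on the levels chosen so that $f_{K+1}$ is almost orthogonal to each $f_j\circ T^n$ in the relevant range of $n$. A probabilistic (Rademacher-sign) selection together with a union bound will give existence.

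First, I observe that weak closeness on the circle is controlled by finitely many Fourier coefficients: given $\rho>0$ there exist $N=N(\rho)\in\NN$ and $\kappa=\kappa(\rho)>0$ such that any two positive measures $\sigma,\sigma'$ on $\T$ satisfying $|\widehat\sigma(n)-\widehat{\sigma'}(n)|<\kappa$ for all $|n|\le N$ lie at weak distance less than $\rho$. Using the bilinear identity $\sigma(g+\eta h)=\sigma(g)+\sigma(h)+2\eta\,\Re\,\sigma(g,h)$ for the cross-spectral measure $\sigma(g,h)$ (whose $n$-th Fourier coefficient is $\langle g,h\circ T^n\rangle$), both conclusions (a) and (b) of the lemma reduce to the uniform Fourier estimates
\begin{align*}
|\langle f_{K+1},f_{K+1}\circ T^n\rangle-\delta_{n,0}|&<\kappa,\qquad |n|\le N,\\
|\langle f_{K+1},f_{j}\circ T^n\rangle|&<\kappa,\qquad |n|\le N,\ 1\le j\le K.
\end{align*}

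Next, I apply Rokhlin's lemma to $(X,T,\mu)$ to obtain, for any small $\nu>0$ and large integer $M$, a base $B\subset X$ with $B,TB,\ldots,T^{M-1}B$ pairwise disjoint and $\mu(F)>1-\nu$ where $F=\bigcup_{i<M}T^iB$. I choose Rademacher signs $\eps_0,\ldots,\eps_{M-1}\in\{\pm1\}$ independently and uniformly at random, set $\bar f=\sum_{i<M}\eps_i \mathbf{1}_{T^iB}$, and let $f_{K+1}=\bar f-\int\bar f\,d\mu$. This $f_{K+1}$ is constant on the atoms of the finite partition $\{T^iB\}_{i<M}\cup\{F^c\}$ and has integral zero, hence lies in $\simmple(X)$; moreover $\|f_{K+1}\|_2^2=\mu(F)+O(1/M)=1+O(\nu+1/M)$, which will yield $\widehat{\sigma(f_{K+1})}(0)\approx 1$. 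For $|n|\le N$ and $1\le j\le K$, $\langle f_{K+1},f_j\circ T^n\rangle=\sum_{i<M}\eps_i\alpha_i^{(j,n)}$ where $\alpha_i^{(j,n)}=\int_{T^iB}f_j\circ T^n\,d\mu$ satisfies $\sum_i|\alpha_i^{(j,n)}|^2\le\mu(B)\|f_j\|_2^2$ by Cauchy--Schwarz; the expected square of the sum is therefore at most $\|f_j\|_2^2/M$. For $1\le|n|\le N$, $\langle f_{K+1},f_{K+1}\circ T^n\rangle=\mu(B)\sum_i\eps_i\eps_{i+n}+O(N/M)+O(\nu)$, whose expected square is $O(1/M)$.

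A Chebyshev union bound over the $O(NK)$ events then shows that for $M$ large enough (depending on $N$, $K$, $\kappa$ and $\max_j\|f_j\|_2^2$) positive probability realizes all the required estimates simultaneously; any such realization of the signs yields the desired $f_{K+1}$. The main subtlety is the ordering of parameters: one first fixes $N$ and $\kappa$ from $\rho$, then picks $\nu$ small enough that the $O(\nu)$ errors are absorbed, and finally chooses $M$ large enough to beat the variances given the $L^2$-norms of the $f_j$'s. Because we only need existence, second-moment (Chebyshev) bounds suffice and we do not need concentration inequalities that would require $L^\infty$ control of the $f_j$'s, so their membership in $L^2$ alone creates no obstacle.
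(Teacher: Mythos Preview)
Your argument is correct and complete. The reduction to finitely many Fourier coefficients, the second-moment bounds on the Rademacher sums, and the Chebyshev union bound all go through as stated; the only cosmetic point is that the constant $c=\int\bar f$ contributes small extra terms $-c^2$ and $-c\int f_j$ to the auto- and cross-correlations, but since $E[c^2]=O(1/M)$ these are absorbed by the same union bound.

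Your route is genuinely different from the paper's. The paper argues abstractly: it passes to the product $(X\times Y,T\times B)$ with a Bernoulli shift $B$, where an exact function $\phi$ with Lebesgue spectrum orthogonal to $L^2(X)$ is available for free, and then invokes a partition-copying form of Rokhlin's lemma to transplant (an approximation of) $\phi$ back into $X$. You instead stay inside $X$ and build $f_{K+1}$ explicitly as a Rademacher-signed indicator on the levels of a tall Rokhlin tower, obtaining the required approximate orthogonality by a direct variance computation. The two constructions are cousins --- your random $\pm1$ sequence on tower levels is precisely the Bernoulli coordinate the paper imports --- but your version is more self-contained and quantitative: it avoids the product system and the copying lemma entirely and makes the dependence of $M$ on $N,K,\kappa,\max_j\|f_j\|_2$ explicit. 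The paper's formulation, on the other hand, separates cleanly the ``ideal object in the extension'' from the ``approximation in the base'', which is a reusable template if one later needs to match more elaborate joint distributions than approximate pairwise orthogonality.
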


\begin{proof} We consider $(Y,B)$ where $B$ is a Bernoulli shift acting on $Y$ and we form the product  $(X\times Y, T\times B)$. Because $B$ is Bernoulli, there exists an $Y$-measurable function $\phi $     (in $L^2(X \times Y)$)
such that  (1) the $(T\times B)^{i}\phi,       i \in Z$,   are orthogonal and furthermore (2) $H_{\phi}$ is orthogonal to $L^2 (X)$ (in $L^2(X \times Y)$). By a simple application of Rokhlin's lemma, we know that if two finite partitions   $P$ and $Q$ are given in 
$(X\times Y)$  where $P$ is $X$-measurable together with an integer $n$ and $\delta>0$, there exists a partition $\tilde{Q}$ which is $X$-measurable such that the two finite partitions  $\overset{n} { \underset{0} {  \bigvee}}  (T\times B)^{i} (P\bigvee Q))$ and $\overset{n} { \underset{0} {  \bigvee}}  (T\times B)^{i} (P\bigvee  \tilde{Q})$ have very close distributions (how close controlled by $\delta$). Applying this to suitable simple functions approximating $\phi$ and the $f_i$'s ($Q$ being the support of $f_{K+1}$ which approximates $\phi$
and $P$ being spanned by the supports of the simple approximations of the $f_i$'s) one gets  (a) and (b) as a direct consequence of (1) and (2). 
\end{proof}

\section{Inducing a system with infinite Lebesgue spectral component}

In this section, we see how we can find  an induced system of any ergodic system $(T,X,\mu)$ that has a spectral component that is Lebesgue  with infinite multiplicity. The proof of the main Theorem \ref{main.infinite}, that we postpone to the last section, will be a direct combination of the construction that we will propose in this section and of the construction in \cite{delarue} of an induced system with pure Lebesgue spectrum. 

\begin{theorem}
\label{main.infinite.component} 
For any ergodic dynamical system $(T,X,\mu)$ there exists an induced system that has a spectral component of pure Lebesgue  type and infinite multiplicity.
\end{theorem}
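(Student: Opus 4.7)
The plan is to construct the data required by Proposition \ref{lemma.convergence} by induction on $n$, combining De La Rue's regularization machinery (Propositions \ref{prop.dlr1} and \ref{prop.dlr2}) with Lemma \ref{lemma.add} to add one new almost-orthogonal function at each step. Fix from the start parameters $\tau_n\to 0$, $\a_n>0$, $\eps_n<\min(2^{-n},\a_n/2)$ and $\rho_n\to 0$. Set $A_0:=X$ and suppose inductively that we have already built nested sets $A_0\supset\cdots\supset A_{n-1}$, simple functions $f_1^{(n-1)},\dots,f_{n-1}^{(n-1)}\in\simmple(A_{n-1})$, and $(\a_{n-1},\tau_{n-1})$-good densities $\{\vphi_j^{(n-1)}\}$ and $\{\vphi_{i,j,\eta}^{(n-1)}\}$ meeting $(A1_{n-1})$--$(A4_{n-1})$.

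At step $n$ I would first use ergodicity of the induced map $T_{A_{n-1}}$ and apply Lemma \ref{lemma.add} on $(A_{n-1},T_{A_{n-1}},\mu_{A_{n-1}})$ to the family $\{f_j^{(n-1)}\}_{j=1}^{n-1}$, producing, for a sufficiently small auxiliary parameter $\rho'$, a simple function $g_n\in\simmple(A_{n-1})$ with $\sigma(g_n)\sim_{\rho'}1$ and $\sigma(g_n+\eta f_j^{(n-1)})\sim_{\rho'}\sigma(f_j^{(n-1)})+1$ for $j<n$ and $\eta\in\{1,i\}$ (the lemma is stated for $\eta\in\{\pm 1\}$, but since $g_n$ lives almost in a cyclic space orthogonal to the $f_j^{(n-1)}$, the same approximation argument handles any unit modulus $\eta$). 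Next I would choose $\delta_n>0$ smaller than the threshold $\delta(\a_{n-1},\tau_{n-1},\eps_n/2)$ of Proposition \ref{prop.dlr1}, small enough to guarantee that the spread-out densities $\vphi_{j,\delta_n}^{(n-1)}$ satisfy $\vphi_{j,\delta_n}^{(n-1)}\approx_{\eps_n}\vphi_j^{(n-1)}$, and similarly for the mixed densities; in addition I would require $2\delta_n<\eps_n$ and that $\delta_n$ forces the spread-out versions to be $(\a_n,\tau_n)$-good (which can be done because spreading out strictly dilates the measure away from $0$). Then I would apply Proposition \ref{prop.dlr2} simultaneously to the $1+2(n-1)$ simple functions $g_n,\,f_j^{(n-1)},\,g_n+\eta f_j^{(n-1)}$ with a weak-closeness parameter $\rho_n$ much smaller than what both Lemma \ref{lemma.conv} and the next step will demand, obtaining a measurable set $A_n\subset A_{n-1}$, independent of the defining partitions of all these functions, with $\mu_{A_{n-1}}(A_{n-1}\setminus A_n)<2\delta_n<\eps_n$.

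I would then set $f_j^{(n)}:={f_j^{(n-1)}}_{|A_n}$ for $j<n$ and $f_n^{(n)}:={g_n}_{|A_n}$; simplicity and the zero-mean condition are preserved thanks to the independence clause in Proposition \ref{prop.dlr2} (this is exactly the purpose of the class $\simmple$ recalled in Section 2.2). As densities I would take $\vphi_j^{(n)}:=\vphi_{j,\delta_n}^{(n-1)}$ for $j<n$, $\vphi_n^{(n)}:=$ density of $\sigma(g_n)_{\delta_n}$, and $\vphi_{i,j,\eta}^{(n)}:=\vphi_{i,j,\eta,\delta_n}^{(n-1)}$ or the analogous spread-out density of $\sigma(g_n+\eta f_j^{(n-1)})_{\delta_n}$. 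Condition $(A4_n)$ is guaranteed by Proposition \ref{prop.dlr2}, condition $(A3_n)$ is immediate since $f_j^{(n)}$ is the restriction of $f_j^{(n-1)}$ to a set of measure at least $1-\eps_n$, and conditions $(A1_n)$--$(A2_n)$ follow from the choice of $\delta_n$ via Proposition \ref{prop.dlr1} applied to the previously good densities and to $1$. Once the induction is complete, Proposition \ref{lemma.convergence} directly yields that $T_{A_\infty}$ has a Lebesgue component of infinite multiplicity, proving Theorem \ref{main.infinite.component}.

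The main obstacle is the simultaneous bookkeeping of the $\a_n,\tau_n,\eps_n,\rho_n,\delta_n$: at step $n$ one must choose $\delta_n$ small enough to preserve all previous strong closeness relations $\vphi_j^{(n)}\approx_{\eps_n}\vphi_j^{(n-1)}$ and $\vphi_{i,j,\eta}^{(n)}\approx_{\eps_n}\vphi_{i,j,\eta}^{(n-1)}$, small enough for the new densities to be $(\a_n,\tau_n)$-good, and at the same time to push the weak closeness parameter below $\rho_n$; this is a routine but delicate quantification that must be done in the right order (first fix $\eps_n,\a_n,\tau_n$, then pick $\delta_n$ via Proposition \ref{prop.dlr1}, then pick the auxiliary $\rho'$ in Lemma \ref{lemma.add} and the target $\rho_n$ in Proposition \ref{prop.dlr2}). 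Everything else is a straightforward application of the lemmas already proven in the preceding sections.
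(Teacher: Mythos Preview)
Your proposal is correct and follows essentially the same route as the paper: both arguments verify the hypotheses of Proposition~\ref{lemma.convergence} by induction, using Lemma~\ref{lemma.add} to introduce the new almost-orthogonal function and then De~La~Rue's spread-out/induce mechanism (Propositions~\ref{prop.dlr1} and~\ref{prop.dlr2}) to regularize all densities simultaneously. The only structural difference is that the paper isolates the ``spread out then induce'' step as a separate black box (Proposition~\ref{prop.step}) and invokes it once per stage, whereas you unpack that step inline; the content is identical.

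One small inconsistency worth fixing: you begin by saying you ``fix from the start'' the sequences $\a_n$, $\eps_n$, $\rho_n$, but as you yourself note in the last paragraph these cannot be prescribed in advance---$\a_n$ is determined only after you know the spread-out densities $\vphi_j^{(n)}$, and $\eps_n$ must then be chosen below $\a_n/2$. The paper handles this by constructing $\a_n,\eps_n,\rho_n$ inductively along with the sets and functions (and even there the dependency is slightly delicate, since $\rho_n$ is chosen using $\eps_{n+1}$). Your closing description of the order of choices is almost right but should read: fix $\tau_n$ in advance; at stage $n$ pick $\delta_n$ via Proposition~\ref{prop.dlr1} to obtain the required $\approx_{\eps_n}$ relations; \emph{then} read off $\a_n$ from the positivity of the spread densities on $[\tau_n,1-\tau_n]$; only after that choose $\eps_{n+1}<\min(2^{-(n+1)},\a_{n+1}/2)$ for use at the next stage. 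Once this ordering is stated cleanly, the argument goes through.
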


\subsection{The inductive step} The proof of Theorem \ref{main.infinite.component} relies on the criterion of Proposition \ref{lemma.convergence} and of the following main inductive step that is based
on  Propositions \ref{prop.dlr1} and  \ref{prop.dlr2} and on Lemma \ref{lemma.add}. 

\begin{proposition} \label{prop.step} Suppose $K\in \N$ and $\left\{\vphi_j\right\}_{j=1,\ldots,K}$ are $(\a,\tau)$-good and $\{\vphi_{i,j,\eta}\}_{1\leq i<j\leq K}$ are in  $C^0(\T,\R_+)$.
 For any $\eps>0$, there exist $\rho=\rho(\left\{\vphi_j\right\}_{j=1,\ldots,K},\{\vphi_{i,j,\eta}\}_{1\leq i<j\leq K},\eps)>0$ such that if $\left\{f_j\right\}_{j=1,\ldots,K}$ are simple functions  such that $\forall (i,j,\eta) \in \{1,\ldots,K\}^2\bbbbb \times \{1,i\}$ with $i<j$
\begin{equation} \label{eq.rho1}\bbbbb 
\bbbbb \sigma(f_i) \sim_{\rho} \vphi_{i}, \quad \bbbbb \sigma(f_i+\eta f_j) \sim_{\rho}\vphi_{i,j,\eta}.\end{equation}\bbbbb 
then there exist $\left\{\vphi'_j\right\}_{j=1,\ldots,K}$ such that $\vphi'_j>0$ on $(0,1)$ for every $j$ and  $\{\vphi_{i,j,\eta}\}_{1\leq i<j\leq K}$ in  $C^0(\T,\R_+)$ such that 
$$\vphi'_j \approx_\eps \vphi_j, \quad \vphi'_{i,j,\eta} \approx_\eps \vphi_{i,j,\eta}$$
and such that for any $\rho'>0$, one can find $A \subset X$ such that $\mu(X \setminus A)<\eps$, $A$ orthogonal to $\left\{f_j\right\}_{j=1,\ldots,K}$, and simple functions $f'_j$ defined on $A$ such that $f'_j={f_j}_{|A}$ and for the system $(T_{|A},A,\mu_A)$ we have \begin{equation} \label{eq.rho2}\bbbbb 
\bbbbb \sigma(f'_i) \sim_{\rho'} \vphi'_{i}, \quad \bbbbb \sigma(f'_i+\eta f'_j) \sim_{\rho'}\vphi'_{i,j,\eta}.\end{equation}\bbbbb 
\end{proposition}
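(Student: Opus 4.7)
The plan is to combine Propositions \ref{prop.dlr1} and \ref{prop.dlr2} applied simultaneously to the extended family of simple functions $\mathcal F := \{f_j\}_{j=1,\ldots,K}\cup\{f_i+\eta f_j\}_{1\le i<j\le K,\,\eta\in\{1,i\}}$, treating each sum $f_i+\eta f_j$ as a single simple function. For this I regard the whole collection $\{\vphi_j\}\cup\{\vphi_{i,j,\eta}\}$ as a common $(\a_0,\tau_0)$-good family for some pair $(\a_0,\tau_0)$; in the inductive setting of Proposition \ref{lemma.convergence} this is automatic because each $\vphi_{i,j,\eta}$ is close to $\vphi_i+1$ and hence bounded below by a positive constant outside a small neighborhood of the origin.

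First, I apply Proposition \ref{prop.dlr1} with parameter $\eps$ to this common family, producing a threshold $\delta_0=\delta(\a_0,\tau_0,\eps)>0$; fix any $\delta<\min(\delta_0,\eps/2)$ and let $\rho$ be the associated weak-closeness parameter. Under the hypothesis \eqref{eq.rho1}, that proposition applied once to each $f_j$ and once to each sum $f_i+\eta f_j$ yields that the spread-out measures $\sigma(f_{j,\delta})$ and $\sigma((f_i+\eta f_j)_\delta)$ have strictly positive continuous densities on $(0,1)$, and setting
$$\vphi'_j:=\text{density of }\sigma(f_{j,\delta}),\qquad \vphi'_{i,j,\eta}:=\text{density of }\sigma((f_i+\eta f_j)_\delta),$$
one has $\vphi'_j\approx_\eps\vphi_j$ and $\vphi'_{i,j,\eta}\approx_\eps\vphi_{i,j,\eta}$.

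Next, given $\rho'>0$, I apply Proposition \ref{prop.dlr2} with this same $\delta$ to the whole extended family $\mathcal F$, with weak-closeness parameter $\rho'$. This yields a set $A\subset X$ with $\mu(A^c)<2\delta<\eps$, independent of the partitions defining all the elements of $\mathcal F$, such that the restrictions $f'_j := f_j|_A$ are simple on $A$ with mean zero (hence $f'_j\in\simmple(A)$), and in the induced system $(T_{|A},A,\mu_A)$ one has
$$\sigma(f'_j)\sim_{\rho'}\sigma(f_{j,\delta}),\qquad \sigma\bigl((f_i+\eta f_j)|_A\bigr)\sim_{\rho'}\sigma((f_i+\eta f_j)_\delta).$$
Since $(f_i+\eta f_j)|_A=f'_i+\eta f'_j$, these two relations are precisely \eqref{eq.rho2}.

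The main thing to watch is the order of quantifiers: $\delta$ must be fixed first, small enough both to make Proposition \ref{prop.dlr1} give strong closeness with error $\eps$ and to ensure $2\delta<\eps$, and only afterwards can $\rho'$ be achieved by shrinking $A$ via Proposition \ref{prop.dlr2}. The one conceptual point — that we also need to control spectral measures of sums $f_i+\eta f_j$, not merely of single functions — dissolves once one notices that a sum of simple functions is again simple, so Propositions \ref{prop.dlr1} and \ref{prop.dlr2} apply verbatim to it provided the reference density $\vphi_{i,j,\eta}$ is itself $(\a_0,\tau_0)$-good.
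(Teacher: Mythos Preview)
Your proof is correct and follows essentially the same two-step route as the paper: first spread out via Proposition~\ref{prop.dlr1} to obtain the new densities $\vphi'_j,\vphi'_{i,j,\eta}$, then induce via Proposition~\ref{prop.dlr2} to realize them up to weak error $\rho'$. Your explicit device of treating each sum $f_i+\eta f_j$ as a single simple function in the extended family $\mathcal F$ is exactly what the paper does implicitly in its Lemma~\ref{lemma.step1} and Step~2, and your remark that the $\vphi_{i,j,\eta}$ must themselves be $(\a_0,\tau_0)$-good for Proposition~\ref{prop.dlr1} to apply---automatic in the inductive application---is a point the paper leaves tacit.
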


Note that since $\vphi'_j>0$ on $(0,1)$, there exists $\a'>0$ such that $\vphi'_j$ is $(\a',\tau/2)$-good, which will allow to iterate the above proposition. 

\subsection{Proof of Theorem \ref{main.infinite.component}} \label{sec.induction}\bbbbb Before proving the proposition, we see how it implies Theorem \ref{main.infinite.component}.
We fix $\tau_n=1/2^{n}$.

From Proposition \ref{lemma.convergence}, it suffices to construct inductively $\a_n>0$, $\eps_n<\min(2^{-n},\a_n/2)$ and $\rho_n \to 0$, and a nested sequence of measurable sets $A_n$ and an array of simple functions $\{f_j^{(n)}\}_{j\in [1,n]} \in \simmple(A_{n})$, $n\geq 1$
such that  $\mu(A_{n-1} \setminus A_n)<\eps_{n}$, $A_n$ orthogonal to $\left\{f_j^{(n-1)}\right\}_{j=1,\ldots,n-1}$, and  arrays of functions $\{\vphi_j^{(n)}\}_{j\in [1,n]}$,  $\{\vphi_{i,j,\eta}^{(n)}\}_{1\leq i<j\leq n\in [1,n]}$, $n\geq 1$, such that $(A1_n)-(A4_n)$ hold.

 Moreover, we suppose that 
 in the construction  $\rho_k$ is given by $$\rho_k=  \rho(\{\vphi^{(k)}_j\}_{j\in [1,k+1]},\{\vphi_{i,j,\eta}\}_{1\leq i<j\leq k+1},\eps_{k+1})$$ where $\rho_k$ is the function from the first part of Proposition \ref{prop.step} and where we took $\vphi^{(k)}_{k+1}:=1$ and $\vphi_{i,k+1,\eta}:=1+\vphi^{(k)}_i$ for $i\in [1,k]$.
 
 \medskip 
\noindent{\bf The construction for $n=1$.} We let $\rho_1=\rho(\{\vphi^{(1)}_1\},\eps_{2})$ where $\rho(\cdot)$ is  given by  the first part of Proposition \ref{prop.step}.

Using Lemma \ref{lemma.add}, we start with $A_1=X$, $\vphi^{(1)}_1 \equiv 1$, and $f_1^{(1)}$ simple and such that
$$\sigma(f_1^{(1)}) \sim_{\rho_1} \vphi^{(1)}_1.$$

 \medskip 
\noindent{\bf Inducing from $n$ to $n+1$.}  Next, we suppose that everything is constructed up to $n$, that is: $A_{n}$ such that $\mu(A_{n-1} \setminus A_{n})<\eps_{n}$ and an array of simple functions $\{f_j^{(n)}\}_{j\in [1,n]} \in \simmple(A_{n})$, $A_n$ orthogonal to $\left\{f_j^{(n-1)}\right\}_{j=1,\ldots,n-1}$,
and  $\{\vphi^{(n)}_j\}_{j\in [1,n]}$ that are $(\a_{n},\tau_{n})$-good for some $\a_{n}>0$, and  $\{\vphi^{(n)}_{i,j,\eta}\}_{1\leq i<j\leq n}$ that satisfy $(A1_{n})-(A4_{n})$.
 We define $\vphi^{(n)}_{i,n+1}=1$ and $\vphi^{(n)}_{i,n+1,\eta}=\vphi^{(n)}_i+1$ and take $\rho_{n+1}=\rho(\{\vphi^{(n)}_j\}_{j\in [1,n+1]},\{\vphi^{(n)}_{i,j,\eta}\}_{1\leq i<j\leq n+1},\eps_{n+1})$.

Using Lemma \ref{lemma.add}, we add to $\{f_j^{(n)}\}_{j\in [1,n]} \in \simmple(A_n)$ a function $f_{n+1}\in \simmple(A_n)$ such that 
\begin{equation} \label{eq.rho3}\bbbbb 
\bbbbb \sigma(f_{n+1}) \sim_{\rho_{n+1}} 1, \quad \bbbbb \sigma(f^{(n)}_i\pm f_{n+1}) \sim_{\rho_{n+1}}\vphi^{(n)}_{i}+1.\end{equation}


Now we apply Proposition \ref{prop.step} to $\{f_1^{(n)},\ldots,f_n^{(n)},f_{n+1}\}$ and $\{\vphi^{(n)}_j\}_{j\in [1,n+1]}$ and  $\{\vphi^{(n)}_{i,j,\eta}\}_{1\leq i<j\leq n+1}$.  Then, the proposition gives us  $A_{n+1}$  orthogonal to $\{f_1^{(n)},\ldots,f_n^{(n)},f_{n+1}\}$ such that $\mu(A_{n} \setminus A_{n+1})<\eps_{n+1}$ and an array of simple functions $\{f_j^{(n+1)}\}_{j\in [1,n+1]} \in \simmple(A_{n+1})$
and  $\{\vphi^{(n+1)}_j\}_{j\in [1,n+1]}$ that are $(\a_{n+1},\tau_{n+1})$-good for some $\a_{n+1}>0$, and  functions $\{\vphi^{(n+1)}_{i,j,\eta}\}_{1\leq i<j\leq n+1} \in C^0(\T,\R_+)$ that satisfy $(A1_{n+1})-(A4_{n+1})$.

In conclusion, Theorem \ref{main.infinite} now follows from Proposition \ref{lemma.convergence}. \hfill $\Box$

\subsection{Proof of Proposition \ref{prop.step}} The proof of Proposition \ref{prop.step} has  two steps, analogous to the main two steps of \cite{delarue}. 

\medskip 

\noindent {\bf Step 1. Spreading out.} \bbbbb  In the first step we elaborate on Proposition \ref{prop.dlr1} and get the following

\begin{lemma} \label{lemma.step1} Suppose $K\in \N$ and $\left\{\vphi_j\right\}_{j=1,\ldots,K}$ are $(\a,\tau)$-good and $\{\vphi_{i,j,\eta}\}_{1\leq i<j\leq K}$ are in  $C^0(\T,\R_+)$.
 For any $\eps>0$, there exist $\rho(\left\{\vphi_j\right\}_{j=1,\ldots,K},\{\vphi_{i,j,\eta}\}_{1\leq i<j\leq K},\eps)>0$ such that if $\left\{f_j\right\}_{j=1,\ldots,K}$ are simple functions  such that $\forall (i,j,\eta) \in \{1,\ldots,K\}^2\bbbbb \times \{1,i\}$
\begin{equation} \label{eq.rho1}\bbbbb 
\bbbbb \sigma(f_i) \sim_{\rho} \vphi_{i}, \quad \bbbbb \sigma(f_i+\eta f_j) \sim_{\rho}\vphi_{i,j,\eta}.\end{equation}\bbbbb 
then for any $\delta$ sufficiently small, we have that $\sigma(f_{j,\delta})$ and $\sigma(f_{i,\delta}+\eta f_{j,\delta})$ have  continuous densities  $\vphi_{j,\delta}$ and  $\varphi_{i,j,\eta,\delta}$, with $\vphi_{j,\delta}>0$ and for $(i,j)\in [1,K]^2$ and $i<j$,
 \begin{equation} \label{eq.rho222}\bbbbb  \sigma(f_{j,\delta})\approx_\eps \varphi_j, \quad   \sigma(f_{i,\delta}+\eta f_{j,\delta})\approx_\eps \varphi_{i,j,\eta}. \end{equation}

\end{lemma}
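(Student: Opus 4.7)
The plan is to deduce Lemma \ref{lemma.step1} directly from Proposition \ref{prop.dlr1} by applying the latter to an enlarged finite family: alongside the original $f_j$ I consider all the linear combinations $g_{i,j,\eta} := f_i + \eta f_j$, and alongside the $\vphi_j$ I consider the target densities $\vphi_{i,j,\eta}$ as additional members. The point is that Proposition \ref{prop.dlr1} already delivers, for any finite family of $(\alpha,\tau)$-good target densities, the implication ``weak closeness of the input spectral measures $\Rightarrow$ strong closeness of the spread densities''; everything to be proved here is of exactly that form once one realises that the cross terms $\sigma(f_{i,\delta}+\eta f_{j,\delta})$ are themselves spread measures.

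Two preliminary observations are needed. First, although the $\vphi_{i,j,\eta}$ are stated only to be continuous and non-negative, in the inductive application of this lemma they will arise in the form $\vphi_i+1$ and hence be bounded below by $1$ on all of $\T$; more generally, since each $\vphi_j$ is $(\alpha,\tau)$-good and each $\vphi_{i,j,\eta}$ is continuous and non-negative, after shrinking $\tau$ to some $\tau^{*}\le\tau$ one may pick $\alpha^{*}>0$ such that every member of the enlarged family
$$\mathcal{F} := \{\vphi_j\}_{1\le j\le K} \cup \{\vphi_{i,j,\eta}\}_{1\le i<j\le K,\ \eta\in\{1,i\}}$$
is $(\alpha^{*},\tau^{*})$-good. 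Second, the spreading operation is linear on $L^2_0$: the Meilijson skew product $X\times\{1,2\}^{\Z}$ is a single system independent of the function being considered, and $f_\delta$ is just the lift of $f$ by composition with the projection $X\times\{1,2\}^{\Z}\to X$. Consequently
$$(f_i+\eta f_j)_\delta = f_{i,\delta} + \eta f_{j,\delta},$$
so that $\sigma(f_{i,\delta}+\eta f_{j,\delta}) = \sigma(g_{i,j,\eta,\delta})$.

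With these two ingredients the proof is a direct invocation of Proposition \ref{prop.dlr1} applied to the enlarged family $\mathcal{F}$ with parameter $\eps$. That proposition provides a threshold $\delta_0 = \delta_0(\alpha^{*},\tau^{*},\eps)>0$ such that, for every $\delta<\delta_0$, there exists $\rho=\rho(\delta)>0$ with the property that if the simple functions $\{f_j\}\cup\{g_{i,j,\eta}\}$ satisfy
$$\sigma(f_j)\sim_\rho \vphi_j, \qquad \sigma(g_{i,j,\eta})\sim_\rho \vphi_{i,j,\eta}$$
---which is exactly hypothesis \eqref{eq.rho1}---then the spread densities $\vphi_{j,\delta}$ and $\vphi_{i,j,\eta,\delta}$ are strictly positive continuous on $(0,1)$ and satisfy the strong closeness $\vphi_{j,\delta}\approx_\eps \vphi_j$ and $\vphi_{i,j,\eta,\delta}\approx_\eps \vphi_{i,j,\eta}$. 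Fixing any such $\delta$ and the corresponding $\rho$ yields the constant announced in the statement.

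There is no real obstacle in this argument: the only substantive point beyond invoking Proposition \ref{prop.dlr1} is the linearity of the spreading operation noted above, which mirrors precisely the mechanism used by De La Rue for the individual $f_j$'s. The extra bookkeeping simply consists of enlarging the family to include the cross terms and checking that the target densities can be absorbed into a common class of $(\alpha^{*},\tau^{*})$-good functions.
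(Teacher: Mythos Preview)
Your argument is correct and is exactly the paper's: the paper's own proof is the single line ``direct application of Proposition \ref{prop.dlr1} to several functions at the same time,'' and you have simply unpacked this, including the key linearity $(f_i+\eta f_j)_\delta=f_{i,\delta}+\eta f_{j,\delta}$ needed to bring the cross terms under Proposition \ref{prop.dlr1}. Your ``more generally'' claim that an arbitrary $\varphi_{i,j,\eta}\in C^0(\T,\R_+)$ can be made $(\alpha^*,\tau^*)$-good by shrinking $\tau$ is not literally true (such a function could vanish away from $0$), but your first remark---that in the inductive scheme these densities arise as $\varphi_i+1\ge 1$---is what is actually needed, and the paper itself does not address this point at all.
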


\begin{proof} The proof is a direct application of Proposition \ref{prop.dlr1} to several functions at the same time.\end{proof}
\medskip 

\noindent {\bf Step 2. Approaching by induction.} Suppose 
 $K\in \N$, $\a,\tau,\eps,\delta>0$, $\left\{\vphi_j\right\}_{j=1,\ldots,K}$, $\{\vphi_{i,j,\eta}\}_{1\leq i<j\leq K}$, $\rho$ and $\left\{f_j\right\}_{j=1,\ldots,K}$ are as in Lemma \ref{lemma.step1}.

We can apply Proposition \ref{prop.dlr2} to the family of simple functions $\left\{f_{j} \right\}_{j=1,\ldots,K}$, and  get for any $\rho'>0$ a set $A$ orthogonal to $\left\{f_{j} \right\}_{j=1,\ldots,K}$ such that $\mu(X\setminus A)<\eps$, and for the simple functions
$f'_j={f_j}_{|A}$ and the system $(T_A,A,\mu_A)$ it holds that  
 \begin{equation} \label{eq.rho2}\bbbbb 
\bbbbb \sigma(f'_i) \sim_{\rho'} \vphi'_{i}:= \sigma(f_{j,\delta}) \approx_\eps \vphi_j, \quad \bbbbb \sigma(f'_i+\eta f'_j) \sim_{\rho'}\vphi'_{i,j,\eta}:= \sigma(f_{i,\delta}+\eta f_{j,\delta}) \approx_\eps \vphi_{i,j,\eta},\end{equation}\bbbbb 
with $\vphi'_j>0$ on $(0,1)$ for every $j$ and  $\{\vphi'_{i,j,\eta}\}_{1\leq i<j\leq K}$ in  $C^0(\T,\R_+)$.

The proof of Proposition \ref{prop.step}  is thus completed. \hfill $\Box$

  \section{Proof of Theorem \ref{main.infinite}} \label{sec.conclusion}
  
To go from Theorem \ref{main.infinite.component} to Theorem \ref{main.infinite}, we can keep the inductive construction of Theorem \ref{main.infinite.component} essentially as is, and add a feature to guarantee that the maximal spectral type is equivalent to Lebesgue. To do so, we just need to make sure that the family of functions we are constructing becomes dense in $L^2_0(X,\mu)$. In fact, we find it simpler to just adjunct to the array 
  of simple functions $\{f_j^{(n)}\}_{j\in [1,n]} \in \simmple(A_n)$ another array of simple functions $\{h_j^{(n)}\}_{j\in [1,n]} \in \simmple(A_n)$ whose role is to guarantee pure Lebesgue spectrum for the final induced system. 
    For this, we follow verbatim \cite[Section 3.3]{delarue}. We recall first the approach in \cite{delarue} to guarantee a pure Lebesgue spectrum for the final induced system:
  
  Start with a family of simple functions $\{h_j\}_{j\in \N}$ that is dense   in $L^2_0(X,\mu)$. At each step of the  construction, pick a set $A_n$ that works simultaneously for the family $\{h_j^{(n)}\}_{j\in [1,n]}$, where $h_j^{(k)}={h_j}^{(k-1)}_{|A_k}$ for every $j\leq k-1$ and $h_k^{(k)}={h_k}_{|A_k}$, in the sense that for every fixed $j$, $\sigma(h_j^{(n)})\sim_{\rho_n} \psi_{j,n}$ where the spectral measures are considered with respect to the induced system $T_{|A_n}$, and   $\psi_{j,n}$ is a sequence of densities that converge in $L^2$ in the sense of Lemma \ref{lemma.conv}. To keep the functions simple at each step of the induction, the set $A_n$ is chosen independent of the partitions that define the simple functions $\{h_j^{(n)}\}_{j\in [1,n]}$.

  At the end of the construction the spectral measure of $h_j^\infty$ for the system $T_{|A_\infty}$ is equivalent to Lebesgue for every $j\in \N$.  
  
 Since for every $j$, $h_j^\infty={h_j}_{|A_\infty}$, it follows from the density of the family $\{h_j\}_{j\in \N}$ that the family  $\{h_j^\infty\}_{j\in \N}$ is dense. Hence the system $(T_{A_\infty},A_\infty)$ has pure Lebesgue spectrum. 
  

  
   Returning to our construction and suppose given at the beginning  
    a family of simple functions $\{h_j\}_{j\in \N}$ that is dense   in $L^2_0(X,\mu)$.

  As in \cite{delarue}, we remark that when we carry on the inductive construction in the proof of Theorem \ref{main.infinite.component}, it is possible to choose $A_{n+1}$ that works simultaneously for $\{f_j^{(n)}\}_{j\in [1,n]}$ (defined as in \S \ref{sec.induction})  as well as for  $\{h_j^{(n)}\}_{j\in [1,n]}$ (defined as above). Note that, in this procedure, at each step $n$ of the induction, the family $\{f_j^{(n)}\}$ and the family  $\{h_j^{(n)}\}_{j\in [1,n]}$
  are updated by mere induction $h_j^{(n)}={h_j}^{(n-1)}_{|A_n}$. Hence,  the density of the family $\{h_j\}_{j\in \N}$ is automatically transmitted to $\{h_j^\infty\}_{j\in \N}$.
 
In conclusion, from the family $\{f_j^{(\infty)}\}_{j\in \N}$ we get the infinite multiplicity of the Lebesgue component, and from the family 
   $\{h_j^{(\infty)}\}_{j\in \N}$ we get that the spectrum is pure Lebesgue. The proof of Theorem \ref{main.infinite} is thus complete. \hfill $\Box$

   \medskip

\end{document}